\newcounter{stepnb}
\newtheorem{theorem}{Theorem}%[section]
\newtheorem{lemma}[theorem]{Lemma}
\newtheorem{proposition}[theorem]{Proposition}
\newtheorem{conj}[theorem]{Conjecture}
\theoremstyle{plain} \newtheorem{corollary}[theorem]{Corollary}
\theoremstyle{remark}
\newtheorem{remark}[theorem]{Remark}
\theoremstyle{plain} \newtheorem{assu}[theorem]{Assumption}
\newcommand{\N}{\mathbb{N}}
\newcommand{\R}{\mathbb{R}}
\newcommand{\D}{{\mathcal D}}
\newcommand{\ee}{\varepsilon}
\newcommand{\e}{\varepsilon}
\newcommand{\TV}{\text{\rm TotVar}}
\newcommand{\Lip}{\mathrm{Lip}}
\newcommand{\be}{\begin{equation}}
\newcommand{\eq}{\end{equation}}
\newcommand{\eps}{\varepsilon}
\newcommand{\loc}{\mathrm{loc}}
\begin{document}

\title[Local limit of nonlocal traffic models]{Local limit of nonlocal traffic models:\\ convergence results and total variation blow-up}

\author[M.~Colombo]{Maria Colombo}
\address{M.C. EPFL SB, Station 8, 
CH-1015 Lausanne, Switzerland
}
\email{maria.colombo@epfl.ch}
\author[G.~Crippa]{Gianluca Crippa}
\address{G.C. Departement Mathematik und Informatik,
Universit\"at Basel, Spiegelgasse 1, CH-4051 Basel, Switzerland.}
\email{gianluca.crippa@unibas.ch}
\author[E.~Marconi]{Elio Marconi}
\address{E.M. Departement Mathematik und Informatik,
Universit\"at Basel, Spiegelgasse 1, CH-4051 Basel, Switzerland.}
\email{elio.marconi@unibas.ch}

\author[L.~V.~Spinolo]{Laura V.~Spinolo}
\address{L.V.S. IMATI-CNR, via Ferrata 5, I-27100 Pavia, Italy.}
\email{spinolo@imati.cnr.it}
\maketitle
{
\rightskip .85 cm
\leftskip .85 cm
\parindent 0 pt
\begin{footnotesize}

\noindent
{\sc Abstract.} Consider a nonlocal conservation where the flux function depends on the convolution of the solution with a given kernel.   In the singular local limit obtained by letting the convolution kernel converge to the Dirac delta one formally recovers a conservation law. However, recent counter-examples show that in general the solutions of the nonlocal equations do not converge to a solution of the conservation law. 
In this work we focus on nonlocal conservation laws modeling vehicular traffic: in this case, the convolution kernel is anisotropic. We show that, under fairly general assumptions on the (anisotropic) convolution kernel, the nonlocal-to-local limit can be rigorously justified provided the initial datum satisfies a one-sided Lipschitz condition and is bounded away from $0$. We also exhibit a counter-example showing that, if the initial datum attains the value $0$, then there are severe obstructions to a convergence proof. 
%\emph{Draft, \today}

\medskip\noindent
{\sc Keywords:} traffic model, nonlocal conservation law, anisotropic kernel, nonlocal continuity equation, singular limit, local limit, Ole\u{\i}nik estimate. 

\medskip\noindent
{\sc MSC (2010):} 35L65.

\end{footnotesize}
}
\section{Introduction}
We deal with the nonlocal conservation law (or nonlocal continuity equation)
\begin{equation}
\label{e:nl}
     \partial_t u + \partial_x \Big[ u V ( u \ast \eta) \Big]=0, 
\end{equation}
where $u: \R_+ \times \R \to \R$ is the unknown, $V : \R \to \R$ is a given Lipschitz continuous function and in the nonlocal term the symbol $\ast$ denotes the convolution with respect to the space variable only. The convolution kernel $\eta \in L^1(\R)$ is compactly supported, nonnegative, and has unit integral.  
Conservation laws involving nonlocal terms appear in models for sedimentation~\cite{Sedimentation}, pedestrian crowds~\cite{ColomboGaravelloMercier,ColomboHertyMercier}, vehicular traffic~\cite{BlandinGoatin,GoatinScialanga}, and others.

\medskip
 
In the present work we are concerned with the nonlocal-to-local limit. More precisely, consider a parameter~$\ee>0$, define $\eta_\ee$ by setting $\eta_\ee(x): = \eta(x/\ee) / \ee$, and consider the family of nonlocal equations
\begin{equation}
\label{e:nle}
     \partial_t u_\ee + \partial_x \Big[ u_\ee V ( u_\ee \ast \eta_\ee) \Big]=0 
\end{equation}
which are obtained from~\eqref{e:nl} by replacing $\eta$ with $\eta_\ee$. 
 When $\ee \to 0^+$, the kernel $\eta_\ee$ converges weakly$-\ast$ in the sense of measures to the Dirac delta, and hence one formally recovers the (local) conservation law 
 \be \label{e:cl}
     \partial_tu + \partial_x \Big[ u V ( u ) \Big]=0.
 \eq
In~\cite{ACT} Amorim, R. Colombo and Teixeira posed the following question: can we rigorously justify this formal limit? In other words, can we show that when $\ee \to 0^+$ the solution $u_\ee$ of~\eqref{e:nl} converges to the
entropy admissible solution of~\eqref{e:cl}? In a previous work~\cite{ColomboCrippaSpinolo}, counter-examples were exhibited showing that the answer to this question is, in general, negative. See also \cite{ColomboCrippaGraffSpinolo} for the role of numerical viscosity.

However, the results in~\cite{ColomboCrippaSpinolo} do not rule out the possibility that, in some more specific case, convergence indeed holds. In particular, several recent works (see for instance Blandin and Goatin~\cite{BlandinGoatin} and Chiarello and Goatin~\cite{ChiarelloGoatin})
 have been devoted to the analysis of the case when $V$ is monotone nonincreasing, the initial datum is nonnegative, and the convolution kernel is anisotropic, in particular it is supported on the negative axis $]- \infty, 0]$. 
This case is very relevant in the modelling of vehicular traffic, where the unknown $u$ represents the density of cars and $V$ their speed. Assuming that $V$ is monotone nonincreasing is standard in local and nonlocal traffic models: the higher the density of cars on a road, the lower their speed. The assumption that the convolution kernel is supported on the negative axis expresses the fact that one expects the drivers to decide their speed based only on the downstream traffic density, i.e.~they only look forward, not backward. 

Remarkably, when $V$ is is monotone nonincreasing and the convolution kernel is supported on the negative axis $]-\infty, 0]$, stronger analytic results are available. More precisely:
\begin{itemize}
\item The nonlocal equation~\eqref{e:nl} satisfies a maximum principle, see~\cite[Theorem~1]{BlandinGoatin} (see also Proposition \ref{T_BS} below). 
\item The nonlocal equation~\eqref{e:nl} is monotonicity preserving, that is if the initial datum is bounded and monotone, so is $u(t, \cdot)$ for every $t>0$, see~\cite[Proposition~2]{BlandinGoatin}. This allows to show that, if the initial datum is monotone and bounded, the nonlocal-to-local limit can be rigorously justified under suitable assumptions on the function $V$, see~\cite{KeimerPflug2}.  
\item Very recently, Bressan and Shen \cite{BS_nonlocal} proved that, if the convolution kernel is $\eta(x) = 1_{]-\infty, 0]} e^{-x}$ and the initial datum is bounded away from $0$ and has bounded total variation, then the solutions of $u_\eps$ of \eqref{e:nle} converge to a weak solution of \eqref{e:cl}. Under the further assumption that the function $V$ is affine,  
they also show that that the limit is the unique entropy admissible solution. The analysis in \cite{BS_nonlocal} relies on a change of variable which allows to rewrite \eqref{e:nle} as  a $2 \times 2$ system of conservation laws with relaxation, provided the convolution kernel is exactly $\eta(x) = 1_{]-\infty, 0]} e^{-x}$.
\item To conclude, we point out  that the numerical experiments in~\cite{ACT,BlandinGoatin} suggest that in the case of anisotropic kernels the behavior of the solutions $u_\ee$ in the local limit $\ee \to 0^+$ is more stable than in the case of general convolution kernels. In particular, they suggest that, if $V$ is monotone nonicreasing and the convolution kernel is supported on $]-\infty, 0]$,  the total variation $\TV \, u_\ee (t, \cdot)$ is a monotone nonincreasing function of time.  
\end{itemize}
Our main positive results establishes the nonlocal-to-local limit from~\eqref{e:nle} to the entropy admissible solution of~\eqref{e:cl} under fairly general assumptions 
on $V$ and on the (anisotropic) convolution kernel, provided that the initial datum has bounded total variation, is bounded away from $0$ and satisfies a one-sided Lipschitz condition. Note that our assumptions on $\eta$ and $V$ are much weaker than those in~\cite{BS_nonlocal}, but on the other hand we impose stronger assumptions on the initial datum, more precisely we have to assume that it satisfies a one-sided Lipschitz condition that is defined in the following.   

To rigorously state our result we have to introduce some notation. First,
we introduce the assumptions we impose on $V$ and $\eta$.
\begin{assu} \label{A_1}
The function $V$ is of class $C^2$ and satisfies $V''\leq 0$. Also, there are $\delta,u_{\max}>0$ such that 
\begin{equation} \label{e:comeV}
 V(u_{\max})=0 \qquad \mbox{and} \qquad V'(v)\le -\delta \quad \text{ for every $v \in [0,u_{\max}]$}.
\end{equation}
\end{assu}
Note that the above assumption is fairly common in traffic models: $u_{\max}$ represents the maximum possible car density, which occurs when cars are completely packed and cannot move. 
\begin{assu}\label{A_2}
The convolution kernel $\eta$ satisfies 
\be \label{e:staneta} 
    \eta (x) \ge 0 \; \text{for every $x \in \R$}, \quad  \eta (x) = 0 \; \text{for every $x \in ]0, + \infty]$}, \quad 
    \int_\R \eta (x) dx =1. 
\eq
Also, $\eta$ is Lipschitz continuous on $]-\infty, 0]$ and there is a constant $D >0$ such that 
\begin{equation} \label{e:D}
\eta(y)\le D \eta'(y), \quad \text{for 
a.e. $y\in ]-\infty,0[$}.
\end{equation}
\end{assu}
In the following we focus on the Cauchy problem, so we impose the initial condition
\begin{equation}\label{E_initial}
u(0,\cdot)=u_0. 
\end{equation} 
We assume that the initial datum belongs to the set (the same as in~\cite{BS_nonlocal})
\begin{equation}\label{eqn:D}
\D:=\big\{ u_0 \in L^\infty(\R): \TV(u_0)<\infty, \, u_0(x)\in [0, u_{\max}] \, \mbox{ for a.e. }x\in \R \big\}.
\end{equation}
Note that the assumption $u_0(x)\in [0, u_{\max}] $ models the fact that 
the initial density $u_0$ should be positive and not exceed the maximum possible density. 
To conclude, for every $f:\R\to \R$, we define the quantity $\Lip^-f $ by setting 
\begin{equation*}
\Lip^-f:= -\inf_{x<y}\frac{f(y)-f(x)}{y-x}.
\end{equation*}
The quantity above bounds the negative part of the difference quotients. In particular $\Lip^-f<\infty$ implies that $f$ has no jumps with negative sign, while positive jumps are allowed.
Our main result of this section is Theorem~\ref{P_one_side}, which establishes a new uniform decay on the negative part of the space derivative of $u_\eps$, that is on $\Lip^-u_\eps(t)$.
\begin{theorem}\label{P_one_side}
Let $V$ and $\eta$ satisfy Assumptions \ref{A_1} and \ref{A_2}, respectively. Assume moreover that $u_0\in \D$ satisfies $\inf u_0 >0$ and $\Lip^-u_0\le L$ for some $L> 0$ .
Let $u_\eps(t)
$ be the solution of the Cauchy problem \eqref{e:nle},\eqref{E_initial}. 
If
\begin{equation} \label{e:ee}
\e < \frac{\inf u_0}{2 D L},
\end{equation}
where $D>0$ is the same as in Assumption \ref{A_1}, 
then 
\begin{equation}\label{E_Lip}
\Lip^-u_\eps(t, \cdot) \le \frac{L}{2\delta Lt+1}<\frac{1}{2\delta t}, \quad \text{for every $t\ge 0$}.
\end{equation}
\end{theorem}
Some remarks are here in order: first, owing to Assumption~\ref{A_1} the flux function $u\mapsto uV(u)$ satisfies $(u V(u))'' \leq - 2 \delta$ and hence the decay estimate~\eqref{E_Lip} is consistent with the celebrated Ole\u{\i}nik estimate~\cite{Oleinik} for (local) conservation laws~\eqref{e:cl}. Second, as a consequence of the decay estimate	~\eqref{E_Lip}  we rigorously establish the nonlocal-to-local limit, more precisely we show that the solutions of  the nonlocal Cauchy problems \eqref{e:nle},\eqref{E_initial} converge to the
entropy admissible solution of \eqref{e:cl},\eqref{E_initial} strongly in $L^1_{\mathrm{loc}}(\R^+ \times \R)$ as $\e\to 0$. Here is the precise statement.  
\begin{corollary}\label{cor:convergence}
Assume that $V$ and $\eta$ satisfy Assumptions \ref{A_1} and \ref{A_2}, respectively, and that $u_0\in \D$ satisfies  $\Lip^-u_0<\infty$ and $\inf_{x\in \R} u_0>0$. Let $u_\e$ be the solution of the Cauchy problem \eqref{e:nle}, \eqref{E_initial}. 
Then, for every $t\ge 0$, the family $u_\e(t, \cdot)$ strongly converges  in $L^1_\loc(\R)$  as $\e \to 0^+$ to the entropy admissible solution of the Cauchy problem \eqref{e:cl}, \eqref{E_initial} $u(t, \cdot)$.
\end{corollary}

\begin{remark}
The assumptions on $u_0$ imposed in the statement of Corollary~\ref{cor:convergence}, namely $\Lip^-u_0<\infty$ and $\inf_{x\in \R} u_0>0$, can be relaxed to $0 \leq u_0 \leq u_{\rm max}$ if one allows
for an $\ee$-dependence of the initial datum of the Cauchy problem  \eqref{e:nle},\eqref{E_initial}, that is 
 if one replaces the condition $u_\eps(0,\cdot)=u_0$ with the condition $u_\eps(0,\cdot)=u_{0, \eps}$ for a suitably chosen sequence $u_{0, \eps} \to u_0$. For instance, under the sole assumption $0 \leq u_0 \leq u_{\rm max}$, 
one can consider $ u_{0,\eps} = \min \{ u_0 \ast \rho_{ \eps^{2/3} }, c_0 \eps^{1/3} \}$, where $\rho_{ \nu}(x):= \rho(x/\nu)/\nu$ and $\rho: \R \to \R^+$ is a fixed smooth convolution kernel.
In this case we can rigorously establish the same nonlocal-to-local limit as in the statement of Corollary~\ref{cor:convergence}. The proof relies again on Theorem~\ref{P_one_side}, which also in this case provides a uniform bound on $\Lip^-u_\eps(t, \cdot)$ as $\eps \to 0$.
\end{remark}
\medskip

We now discuss our main negative result concerning the nonlocal-to-local limit from
 \eqref{e:nle} to  \eqref{e:cl}. First, we point out that the proof of Corollary~\ref{cor:convergence} relies on the Helly-Kolmogorov Compactness Theorem. More precisely, we show that the one-sided Lipschitz estimate~\eqref{E_Lip} implies a uniform local bound on the total variation, i.e. it implies that $\TV \{ u_\ee (t, \cdot); ]-R, R[ \}$ is uniformly bounded with respect to $t$ and $\ee$, for every $R>0$, see~\eqref{e:tv}. As a matter of fact, to the best of our knowledge \emph{all} the known convergence results on the  nonlocal-to-local limit (that is, Corollary~\ref{cor:convergence} and the results in~\cite{BS_nonlocal,KeimerPflug2}) are based on the Helly-Kolmogorov Compactness Theorem and require a uniform control on the total variation. The only exception is the convergence result due to Zumbrun~\cite{Zumbrun}, which however only applies to time intervals where the solution of the conservation law~\eqref{e:cl},\eqref{E_initial} is very regular (of class $C^4$). 
We point out in passing that the maximum principle implies weak-$\ast$ compactness of the family $\{ u_\ee \}$, but weak-$\ast$ convergence alone does not allow to pass to the limit in the equation~\eqref{e:nle}. Note furthermore that the semigroup of entropy admissible solutions of scalar conservation laws~\eqref{e:cl} is total variation decreasing, and hence uniform bounds on the total variation of~\eqref{e:nle} are somehow natural in view of a convergence result. Also, numerical experiments in~\cite{ACT,BlandinGoatin} suggest that, in the case of anisotropic convolution kernels, the total variation of~\eqref{e:nle}  is uniformly bounded. 

Our main negative result states that, if $V$, $\eta$ and $u_0$ satisfy \emph{all} the hypotheses of Corollary~\ref{cor:convergence} \emph{but} the condition $\inf u_0 >0$, then the total variation of~\eqref{e:nle},\eqref{E_initial} can blow up in $\ee$ for every positive time. Althought strictly speaking this does not rule out convergence in the nonlocal-to-local limit, it provides a severe obstruction to a convergence proof, as it prevents the application of the argument used in the proof of basically all the known convergence results. 
\begin{theorem} 
\label{t:blowup}
Assume that $V(u) = 1-u$ and that either $\eta (x): = \mathbbm{1}_{[-1, 0]}(x)$ or $\eta$ satisfies the 
following assumption: $\eta$ satisfies~\eqref{e:staneta}, it is Lipschitz continuous on $]- \infty, 0]$ and 
$\eta'(x) \ge 0$ for a.e. $x \in ]- \infty, 0]$.  Then there is $u_0 \in L^\infty(\R)$ such that $0 \leq u_0 (x) \leq 1$ for a.e.~$x \in \R$,  $\mathrm{TotVar} \ u_0 < + \infty$, $\mathrm{Lip}^- u_0 < + \infty$ 
and the solution of the Cauchy problem~\eqref{e:nle},\eqref{E_initial} satisfies
\begin{equation} \label{e:blowup}
   \sup_{\ee >0} \mathrm{TotVar} \ u_\ee (\tau, \cdot) = + \infty, \quad \text{for every $\tau>0$}. 
\end{equation}
\end{theorem}
Some remarks are here in order. First, $V(u) = 1-u$ satisfies Assumption~\ref{A_1}. Second, our counter-example is completely explicit and in~\S\ref{s:pbu} we provide the precise formula for an initial datum $u_0$ satisfying the statement of the theorem,  see~\eqref{e:uzero} and~\eqref{e:newa}. Third, if the initial datum $u_0$ were monotone one could apply the results in~\cite{BlandinGoatin,KeimerPflug2} and establish uniform bounds on the total variation, and indeed the initial datum we exhibit in the proof of Theorem~\ref{t:blowup} is not monotone. Fourth, it is natural to compare 
Corollary~\ref{cor:convergence} and Theorem~\ref{t:blowup} and wonder what are the sharp conditions that prevent the total variation blow up. Our guess is that the conditions $\mathrm{Lip}^- u_0<+\infty$ is just a technical hypothesis, and that the key condition to obtain a uniform bound on the total variation is $\inf u_0 >0$. More precisely, we propose the following conjecture.
\begin{conj}\label{C_conj}
Assume that $V$ and $\eta$ satisfy Assumptions \ref{A_1} and \ref{A_2}, respectively, and that $u_0\in \D$ satisfies   $\inf_{x\in \R} u_0>0$. Let $u_\e$ be the solution of the Cauchy problem \eqref{e:nle}, \eqref{E_initial}. Then for every $T, R>0$ there is a constant $C>0$, possibly depending on $T$, $R$ and $u_0$, such that 
\be \label{e:conj}
     \TV \{ u_\ee (t, \cdot); ]-R, R[ \} \leq C, \quad \text{for every $t \in [0, T]$ and every $\e>0$}.
\eq
\end{conj}
As mentioned above we remark that Conjecture \ref{C_conj} has been proved in \cite{BS_nonlocal} under the assumption that $\eta(x)= 1_{]-\infty, 0]} e^{-x}$.
The exposition is organized as follows: in~\S\ref{sec:prelim} we establish the well-posedness of the Cauchy problem~\eqref{e:nle},\eqref{E_initial} by slightly extending previous results in~\cite{BlandinGoatin,BS_nonlocal,ChiarelloGoatin,KeimerPflug}. In~\S\ref{s:pp} we establish the proof of Theorem~\ref{P_one_side} and Corollary~\ref{cor:convergence} and in~\S\ref{s:pbu} we establish the proof of Theorem~\ref{t:blowup}.

\section{Well-posedness of the Cauchy problem for fixed $\eps>0$}\label{sec:prelim}
In the case of anisotropic kernels, well-posedness of the Cauchy problem \eqref{e:nl},\eqref{E_initial} is  discussed in several works, see for instance \cite{BlandinGoatin,BS_nonlocal,ChiarelloGoatin,KeimerPflug}. The following proposition slightly extends previous well-posedness results. 
\begin{proposition}\label{T_BS}
Let Assumptions \ref{A_1} and \ref{A_2} hold true and fix $\eps>0$. Then there is  a unique semigroup $S^\eps:[0,+\infty[ \times \D \to \D$, continuous in $L^1_\loc$, such that each trajectory 
$t\mapsto S^\eps_t u_0$ provides a distributional solution of the Cauchy problem \eqref{e:nl}, \eqref{E_initial}.

Moreover, the semigroup $S^\eps$ satisfies the following properties.
\begin{itemize}
\item[i)] Assume  $u_0(x) \in [a,b]$ for a.e. $x\in \R$ and for some $0\le a<b\le u_{\max}$. Then
\begin{equation}\label{E_MP}
a\le S_tu_0 (x)\le b \qquad \mbox{for every $t>0$ and for a.e. $x\in \R$}.
\end{equation} 
\item[ii)] For every $k\in \N$ and every $T, A \ge 0$ there is a constant $C=C(k,T,\eta, \e, A)$ such that if $\|u_0\|_{C^k}\le A$, then $\|S_t u_0\|_{C^k}\le C$ for every $t \in [0, T]$.
\item[iii)] Assume that $\|u_0\|_{C^1}<\infty$, then the map $t\mapsto \Lip^-(S^\ee_t u_0)$ is a locally Lipschitz continuous function from $[0, + \infty[$ to $[0,+\infty[$.  
\end{itemize}
\end{proposition}
\begin{proof}
The maximum principle~\eqref{E_MP} is established in~\cite{BlandinGoatin}. The proof of property ii) is provided in~ \cite[\S2]{BS_nonlocal}. We are left with establishing property iii):  we fix $\ee>0$ and by combining property ii) with the equation~\eqref{e:nle}, we establish $C^0$ bounds on $\partial_t u_\eps$. We conclude that $u_\eps$ is of class $C^1$ with respect to both space and time and it is a classical solution of~\eqref{e:nl}. Next, we set $v_\ee:= \partial_x u_\ee$ and we point out that 
\be  \label{e:Lipreg}
    \Lip^-(S^\ee_t u_0) = -\inf_{x \in \R}  \partial_x u^\ee (t, x)= -\inf_{x \in \R}  v^\ee (t, x)   . 
\eq
We use the characteristic lines of~\eqref{e:nl} and we denote by $X_\ee (\cdot, \bar t, \bar x)$ the solution of the Cauchy problem 
\be 
\left\{
\begin{array}{ll}
         \displaystyle{ \frac{d X_\ee}{d t} = V (u_\ee *\eta_\e)(t, X_\ee)} \\ 
         X_\ee (t = \bar t) = \bar x.  \displaystyle{\phantom{\int}}
\end{array}
\right.
\eq
By differentiating~\eqref{e:nl} with respect to $x$ we infer that the material derivative of $v_\ee$ is given by
\begin{equation}\label{E_v_nu}
\partial_t v_\ee + V(u_\ee *\eta_\e)\partial_x v_\ee = - 2v_\ee V'(u_\ee*\eta_\e)v_\ee*\eta_\e - u_\ee V''(u_\ee*\eta_\e)(v_\ee*\eta_\e)^2 - u_\ee V'(u_\ee*\eta_\e)(\partial_x v_\ee *\eta_\e).
\end{equation}
Since $\|u_\ee(t)\|_{C^1}$ is bounded on $ [0,T]$, then the source at the right hand side of \eqref{E_v_nu} is uniformly bounded on $[0,T]\times \R$. 
To conclude, we fix $t_1, t_2 \in [0, T]$ and just to fix the ideas we assume that $ \Lip^-(S^\ee_{t_1} u_0)\ge
 \Lip^-(S^\ee_{t_2} u_0)$. We fix an arbitrarily small constant $h>0$ and a point $x_1$ such that  
\be \label{e:icsuno}
    v_\ee (t_1, x_1) \leq \inf_{x \in \R}  v^\ee (t_1, x) + h.
\eq
By recalling~\eqref{e:Lipreg} we get 
\begin{equation*}
\begin{split}
    | \Lip^-(S^\ee_{t_1} u_0)-
 \Lip^-(S^\ee_{t_2} u_0)| & = 
   \Lip^-(S^\ee_{t_1} u_0)-
 \Lip^-(S^\ee_{t_2} u_0) \stackrel{\eqref{e:Lipreg}}{\leq}
     -  \inf_{x \in \R}  v^\ee (t_1, x) - \Lip^-(S^\ee_{t_2} u_0)\\
& \stackrel{\eqref{e:icsuno}}{\leq}
    -   v_\ee (t_1, x_1) + h  - \Lip^-(S^\ee_{t_2} u_0)
    \stackrel{\eqref{e:Lipreg}}{\leq}  -   v_\ee (t_1, x_1) + h + v_\ee (t_1, X_\ee (t_2, t_1, x_1)). 
\end{split}
\end{equation*}
By using the fact the material derivative~\eqref{E_v_nu} is uniformly bounded  and the arbitrariness of the constant $h$ we conclude that the map $t\mapsto \Lip^-(S^\ee_t u_0)$ is a Lipschitz continuous function on $[0, T]$. 
\end{proof}
\begin{remark}[Preservation of the monotonicity]
\label{r:pm}
For sake of completeness, we sketch here a formal proof of the preservation of the monotonicity of the initial datum rigorously shown in~\cite[Proposition 2]{BlandinGoatin} and~\cite[\S4]{KeimerPflug2}.  
Let us assume to fix the ideas that the initial datum $u_\ee(0, \cdot)$ is nondecreasing, that is (using the same notation as in the proof of Proposition~\ref{T_BS}) $v_\ee(0,\cdot)\geq 0$.
We evaluate~\eqref{E_v_nu} at a minimum point $\bar x$ of $v_\ee(t,\cdot)$ at which $v_\ee(t,\bar x)=0$: we have $\partial_x v_\ee(t,\bar x) = 0$. By using Assumption~\ref{A_2} and integrating by parts we get 
$$
    \partial_x v_\ee *\eta_\e (t,\bar x) = \int_0^\infty \eta_\ee (y) \partial_x v_\ee (t, \bar x-y) dy  = 
    \underbrace{- \eta_\ee (0) \partial_x v_\ee(t,\bar x)}_{=0} + \int_0^\infty \eta'_\ee (y) v_\ee (t, \bar x-y)  dy \ge 0.
$$
We conclude that $\partial_t v_\ee(t,\bar x) \geq 0$ and this yields the preservation of the monotonicity of $u_\ee$.
\end{remark}

\section{Proof of Theorem~\ref{P_one_side} and of Corollary~\ref{cor:convergence}} \label{s:pp}

\subsection{Proof of Theorem~\ref{P_one_side}}
First, we point out that it suffices to establish the statement of Theorem~\ref{P_one_side} under the additional assumption that  $u_0 \in \D$ satisfies $\|u_0\|_{C^2}<\infty$.
Indeed, estimate~\eqref{E_Lip} in the general case $u_0\in \D$ follows by the $L^1_\loc$-continuity of the semigroup $S^\ee_t$ defined in the statement of Proposition~\ref{T_BS} and by the lower semicontinuity of the map $u\mapsto \Lip^-u$.

We fix $u_0 \in \D$ such that $\|u_0\|_{C^2}<\infty$. By arguing as in the proof of Proposition~\ref{T_BS} we infer that $S^\ee_t u_0$ is $C^2$ with respect to space and time and it is a classical solution of~\eqref{e:nl},\eqref{E_initial}.  Next, we fix $T>0$  and we separately consider the following two cases:
\begin{enumerate}
\item[1.] for every $t \in [0,T]$ there is $x\in \R$ such that $\partial_x u_\eps(t,x) \le 0$;
\item[2.] there is $t \in [0,T]$ such that $\partial_x u_\eps(t,x)>0$ for every $x\in \R$.
\end{enumerate}
{\sc Case 1.} Fix $t \in [0, T]$: by using the fact that there is   $x\in \R$ such that $\partial_x u_\eps(t,x) \le 0$ and recalling that $\TV(u_\eps(t))$ and $\|u_\eps(t)\|_{C_1}$ are both finite  owing to Proposition~\ref{T_BS}, we conclude that there is $\bar x\in \R$ such that 
\begin{equation} \label{e:c}
\partial_x u_\eps(t,\bar x)= \min_{x\in \R}\partial_xu_\eps(t, x)= \min_{x\in \R}v_\eps (t, x) :=-c(t) \leq 0,
\end{equation}
where we have used the notation $v_\eps=\partial_x u_\eps$ and to simplify the exposition we write $c(t)$ instead of $c_\ee (t)$. Next, we evaluate~\eqref{E_v_nu} at $(t,\bar x)$ and recall that $V$ is concave and that $u_\ee \ge 0$.  We obtain
\begin{equation}\label{E_lower_der}
\partial_t v_\eps (t,\bar x)\ge V'(u_\eps*\eta_\e (t,\bar x)) \left[2c(t) v_\eps*\eta_\e(t,\bar x) - u_\eps(t,\bar x)(\partial_x v_\eps *\eta_\e(t,\bar x))\right].
\end{equation}
Integrating by parts we get
\begin{equation}\label{E_conv}
\begin{split}
\partial_x v_\eps *\eta_\e (t,\bar x) &=
 \int_0^\infty\partial_x v_\eps(t,\bar x + y)\eta_\e(-y)dy \\
 &=
c(t)\eta_\e(0) + \int_0^\infty v_\eps(t,\bar x + y)\eta_\e'(-y)dy.
\end{split}
\end{equation}
Plugging \eqref{E_conv} into \eqref{E_lower_der} we get
\begin{equation} \label{e:primastima}
\begin{split}
\partial_t v_\eps (t,\bar x)&\ge ~  V'(u_\eps*\eta_\e (t,\bar x))\left( -u_\eps(t,\bar x)c(t)\eta_\e(0) + \int_0^\infty v_\eps(t,\bar x+y)\left[ 2c(t)\eta_\e(-y)-u_\eps(t,\bar x)\eta_\e'(-y)\right]dy \right)\\
&= ~ -V'(u_\eps*\eta_\e (t,\bar x))\left( u_\eps(t,\bar x)c(t)\eta_\e(0) + \int_0^\infty v_\eps(t,\bar x+y)\left[ u_\eps(t,\bar x)\eta_\e'(-y) - 2c(t)\eta_\e(-y)\right]dy \right).
\end{split}
\end{equation}
Next, we combine~\eqref{e:D} with~\eqref{e:ee} and recall that $c(0) \leq L$ and that $\eta_\ee (y) = \eta (y/\ee) / \ee$. We obtain 
\begin{equation}\label{E_e_small}
u_\eps(t,\bar x)\eta_\e'(-y) - 2c(t)\eta_\e(-y) \ge 0 \quad \text{for a.e. $y \in [0, + \infty[$ at $t=0$}. 
\end{equation}
We now introduce the value $\tau \in [0, T]$ by setting  
\be \label{e:tau}
   \tau: = \sup \left\{ t \in [0, T]: \;  u_\eps(s,\bar x)\eta_\e'(-y) - 2c(s)\eta_\e(-y) \ge 0   \; \, \text{for a.e. $y \in [0, + \infty[$\, and every $s \in [0,t]$} \right\}.
\eq
Owing to~\eqref{e:c}, $v_\eps(t,\bar x +y)\ge -c(t)$ for every $y\ge 0$. 
By using~\eqref{e:primastima} and~\eqref{e:tau} we get that for every $t \in [0, \tau]$ 
\begin{equation}\label{E_est_low}
\begin{split}
\partial_t v_\eps (t,\bar x)&\ge   -V'(u_\eps*\eta_\e (t,\bar x))\left( u_\eps(t,\bar x)c(t)\eta_\e(0) -c(t)\int_0^\infty \big( u_\eps(t,\bar x)\eta_\e'(-y) - 2c(t)\eta_\e(-y)\big) dy \right) \\
&\stackrel{\int \eta (y) dy =1}{=}  -V'(u_\eps*\eta_\e (t,\bar x))( 2c(t)^2 ) \\
&\stackrel{\eqref{e:comeV}}{\ge}  2\delta c(t)^2.
\end{split}
\end{equation}
We now point out that $c(t) =  \Lip^-(S^\ee_t u_0)$ and hence, by property iii) in the statement of Theorem~\ref{T_BS}, it is a.e. differentiable. By combining~\eqref{e:c} and~\eqref{E_est_low} we get $\dot{c}(t) \leq 
- 2\delta c(t)^2$ and by a classical comparison argument for ODEs we arrive at
$$
    c(t) \leq  \frac{L}{2\delta Lt+1} \quad  \text{on $[0, \tau]$}. 
$$
To conclude, we are left to show that $\tau =T$. Assume by contradiction that $\tau <T$, then by the continuity of $c$ we get that
$$
    c(\tau) = u_\ee (\tau, \bar x) \inf_{y \in \mathrm{supp} \, \eta_\ee} \frac{\eta'_\ee (y)}{2 \eta_\ee (y)}
    \stackrel{\eqref{e:D},\eqref{E_MP}}{\ge} \frac{ \inf u_0 D }{ 2 \ee}  \stackrel{\eqref{e:ee}}{>} L \ge c(0).
$$
On the other hand, the inequality $\dot{c}(t)  \leq  - 2\delta c(t)^2$ on $[0, \tau[$ implies that $c(\tau)\leq c(0)$, which contradicts the previous chain of inequalities, shows that $\tau=T$ and hence establishes~\eqref{E_Lip} in {\sc Case 1}. \\
{\sc Case 2.} We define $\bar t \in [0, T]$ by setting 
\be 
     \bar t : = \inf \{ t \in [0, T]:  \;  \partial_x u_\eps(t,x)>0 \; \text{for every $x\in \R$} \}.
\eq
Assume $\bar t>0$: on the interval $[0, \bar t[$ we can apply the same argument as in {\sc Case 1} and, by using the continuity of the function $t\mapsto \Lip^-u_\eps(t)$ (see property iii) in the statement of Theorem~\ref{T_BS}),  establish~\eqref{E_Lip} on $[0, \bar t]$. Next, we use the fact that~\eqref{e:nl} preserves the monotonicity of the initial datum, see~\cite{BlandinGoatin,KeimerPflug2} and Remark~\ref{r:pm}. This implies that, for every $ t \in ]\bar t, T]$, $u_\ee(t, \cdot)$ is a monotone increasing function, that is $\Lip^-u_\eps(t) \leq 0$. If $\bar t=0$, then we can directly apply the preservation of monotonicity argument. This concludes the proof of Theorem~\ref{P_one_side}.
\qed

\begin{remark}
In the proof of Proposition \ref{P_one_side} we have used an approximation argument on the initial datum, because the computations require  that $u_\ee(t)\in C^2(\R)$, that is $u_0 \in C^2 (\R)$. Another possibility is to apply an approximation argument on the equation. More precisely, one could consider the viscous equation 
\begin{equation}
\partial_tu^\nu_\ee + \partial_x(u^\nu_\ee V(u^\nu_\ee *\eta_\e))=\nu \partial_{xx}u^\nu_\ee, \quad \nu >0,
\end{equation}
which has a regularizing effect. 
The same proof as in Proposition \ref{P_one_side} establishes the main estimate \eqref{E_est_low} for $u^\nu_\eps$.  Next, one could argue as in the proof of Corollary~\ref{cor:convergence} and show that $u^\nu_\ee$ strongly convergence in $L^1_{\mathrm{loc}}$ to $u_\ee$ as $\nu \to 0^+$.  By the $L^1$-lower semicontinuity of the map $u_\ee \mapsto \mathrm{Lip}^- (u_\ee)$ this eventually  yields~\eqref{E_est_low}. 
\end{remark}

\begin{remark}
Note that the one-sided Lipschitz estimate \eqref{E_Lip} does not depend on $L$, that is on $\Lip^-u_0$. However, we have only established \eqref{E_Lip}  for $\ee$ satisfying~\eqref{e:ee}, and hence the range of $\e>0$ such that  \eqref{E_Lip}  holds true does depends on $\Lip^-u_0$. This is the reason why Theorem~\ref{P_one_side} does not apply to general $BV$ initial data. 
\end{remark}
\subsection{Proof of Corollary~\ref{cor:convergence}}
We proceed according to the following steps. \\
{\sc Step 1:} uniform $BV$ bounds. Theorem \ref{P_one_side} implies that for every $\e>0$ sufficiently small and for every $t>0$ we have 
\begin{equation}\label{E_Lip_est}
\Lip^-u_\e(t)\le \frac{L}{2\delta tL+1}\le L.
\end{equation}
We now want to establish uniform bounds in $BV_{\mathrm{loc}}$, that is we want to show that, for any $R>0$, the quantity $\TV\{ u_\e(t);]-R,R[\}$ is uniformly bounded with respect to $\ee$ and $t$. We recall that 
\be \label{e:tv}
\TV\{ u_\e(t);[-R,R]\}= \sup_{-R \leq x_1 \leq \dots \leq x_N \leq R} \sum_{i=1}^{N-1} |u_\ee (t, x_{i+1} ) - u_\ee (t, x_i)|.
\eq
We consider separately the positive and the negative parts of the total variation of $u_\e(t)$, defined respectively by
\be
\begin{split}
\TV^+\{ u_\e(t);[-R,R]\}= &~ \sup_{-R \leq x_1 \leq \dots \leq x_N \leq R} \sum_{i=1}^{N-1} (u_\ee (t, x_{i+1} ) - u_\ee (t, x_i))^+, \\ 
\TV^-\{ u_\e(t);[-R,R]\}= &~ \sup_{-R \leq x_1 \leq \dots \leq x_N \leq R} \sum_{i=1}^{N-1} (u_\ee (t, x_{i+1} ) - u_\ee (t, x_i))^-.
\end{split}
\eq
From \eqref{E_Lip_est} it follows that
\be\label{E_TV-}
\TV^-\{ u_\e(t);[-R,R]\}\le 2LR,
\eq
therefore
\be\label{E_TV+}
\begin{split}
\TV^+\{ u_\e(t);[-R,R]\} =&~ \TV^-\{ u_\e(t);[-R,R]\} + u_\e(t,R)-u_\e(t,-R) \\
\le & ~ 2LR + u_{\max}.
\end{split}
\eq
It follows from \eqref{E_TV-} and \eqref{E_TV+} that
\begin{equation}\label{E_TV_loc}
\begin{split}
\TV\{ u_\e(t);[-R,R]\}= & ~\TV^+\{ u_\e(t);[-R,R]\} + \TV^-\{ u_\e(t);[-R,R]\} \\
\le & ~ 4LR + u_{\max}
\end{split}
\end{equation} 
and this concludes Step 1. \\
{\sc Step 2:} $\ee \to 0^+$ limit. First, we point out that, by the properties of convolution, from~\eqref{E_TV_loc}
we deduce that $u_\ee \ast \eta_\ee$ satisfies the same estimate. By using equation~\eqref{e:nle} we conclude that
$u_\e\in \Lip(\R_+,L^1_{\loc}(\R))$ and that the Lipschitz constant is uniform in $\ee$, provided $\e>0$ 
is sufficiently small. We apply the Helly-Kolmogorov Compactness Theorem and conclude that for every sequence $\e_n\to 0$ there exists a subsequence
$\e_{n_k}$ such that $u_{\e_{n_k}}$ converges to some function $\tilde u$ in $L^1_\loc(\R_+\times \R)$. Note that $\tilde u$ is a weak solution of the Cauchy problem~\eqref{e:cl},\eqref{E_initial}. To conclude the proof we are left to show that $\tilde u$ is actually \emph{the} entropy admissible solution of~\eqref{e:cl},\eqref{E_initial}. First, we point out that the flux function $u \mapsto u V(u)$ satisfies $(uV)''\leq - 2 \delta$. Next, we use~\eqref{E_Lip} and conclude that $\tilde u$ satisfies the Ole\u{\i}nik estimate 
\begin{equation}
\Lip^-\tilde u(t)\le \frac{1}{2\delta t}
\end{equation} 
and, owing to Chapter 8.5 in \cite{Dafermos_book}, this implies that $\tilde u$ is the entropy admissible solution.
\qed

\section{Proof of Theorem~\ref{t:blowup}} \label{s:pbu}
The proof of Theorem~\ref{t:blowup} is based on the explicit construction of an initial datum $u_0$ satisfying the statement.  To highlight the basic ideas of the construction and avoid some technicalities, we first provide in~\S\ref{ss:pp} the proof of Proposition~\ref{p:blowup} below. Proposition~\ref{p:blowup} is basically a weaker version of Theorem~\ref{t:blowup} as it establishes the total variation blow-up~\eqref{e:blowup} in the case where $\eta(x) =  \mathbbm{1}_{[-1, 0]}(x)$. Also, the initial datum $u_0$ constructed in the proof of Proposition~\ref{p:blowup} satisfies $0 \leq u_0 \leq 1$ and ${\TV \, u_0 < + \infty}$, but does not satisfy the one-sided Lipschitz condition $\mathrm{Lip}^- u_0 <+ \infty$. Next, in~\S\ref{ss:pt} we complete the proof of Theorem~\ref{t:blowup} by extending the construction to more general kernels and to initial data satisfying the one-sided Lipschitz condition. 
\begin{proposition} \label{p:blowup}
Assume that $V(u) = 1-u$ and $\eta(x) =  \mathbbm{1}_{[-1, 0]}(x)$, then there is $u_0 \in L^1 (\R)$ such that 
$0 \leq u_0 \leq 1$, ${\TV \, u_0 < + \infty}$ and the solution of the Cauchy problem~\eqref{e:nle},\eqref{E_initial} satisfies~\eqref{e:blowup}. 
\end{proposition} 
\subsection{Proof of Proposition~\ref{p:blowup}}\label{ss:pp}
Note that, under the assumptions of Proposition~\ref{p:blowup}, equation~\eqref{e:nl} boils down to
\be 
\label{e:nlee}
      \partial_t u_\ee + \partial_x \left[ u_\ee \left( 1 - \frac{1}{\ee} \int_{x}^{x+ \ee} u_\ee (t, z) \, dz \right) \right]=0.
\eq
The proof of Proposition~\ref{p:blowup} is organized as follows: in~\S\ref{s:cl} we use the same approach as in~\cite{CLM,KeimerPflug} and we discuss the characteristic lines of~\eqref{e:nlee}, in~\S\ref{ss:increase} we describe the basic idea underpinning the construction of $u_0$, in~\S\ref{s:id} we provide the actual construction of $u_0$, in \S\ref{s:pr} we establish some preliminary results and in~\S\ref{s:bu} we eventually conclude the proof. 
\subsubsection{Characteristic lines} \label{s:cl}

We refer to the analysis in Crippa and L{\'e}cureux-Mercier~\cite{CLM} and Keimer and Pflug~\cite{KeimerPflug} and we recall that the solution of~\eqref{e:nlee} given by Proposition \ref{T_BS}  can be obtained via a fixed point argument by considering the continuity equation 
\be \label{e:continuity}
    \partial_t u_\ee + \partial_x [u_\ee (1- w_\ee)] =0
\eq
requiring that the field $w_\ee$ be given by 
\be \label{e:v}
     w_\ee (t,x) = \frac{1}{\ee}
     \int_{x}^{x+\ee} u_\ee (t, z) \,dz \quad\implies\quad - \partial_x \big[ 1-w_\ee (t, x) \big] = \frac{u_\ee (t, x + \ee)- u_\ee (t, x)}{ \ee}.  
\eq 
The solution of~\eqref{e:continuity} can be expressed by relying on the method of characteristics. 
In the following we term $X_\ee(\cdot, y)$ the characteristic line 
starting at the point $y$, i.e. the solution of the Cauchy problem
\be \label{e:chardef}
\left\{
\begin{array}{l}
\displaystyle{\frac{d}{dt} X_\ee (t, y) = 1 -w_\ee (t, X_\ee(t,y)) }  \\ \\ 
X_\ee (0, y) = y. 
\end{array}
\right.
\eq
By~\eqref{e:v} we get that, if the initial datum is bounded (and hence the solution is bounded at all times, by the analysis in~\cite{CLM,KeimerPflug}), then for any fixed~$\ee>0$ the vector field $1-w_\ee$ is locally Lipschitz continuous with respect to the variable $x$ and continuous with respect to the variable $t$. This implies that the Cauchy problem~\eqref{e:chardef} is well posed and that the characteristic lines are well defined. Also, by combining~\eqref{e:continuity} with~\eqref{e:v} we get that the material derivative satisfies 
\be 
\label{e:material} 
    \frac{d}{dt} u_\ee (t, X_\ee) = - u_\ee(t,X_\ee) \partial_x \big[ 1-w_\ee (t,X_\ee)\big] = 
    u_\ee  (t, X_\ee) \ \frac{u_\ee (t, X_\ee+ \ee )- u_\ee (t, X_\ee)}{ \ee}.
\eq 
We point out in passing that formula~\eqref{e:material} formally shows that, at a maximum point of $u_\ee (t, \cdot)$, the material derivative is negative, which yields \eqref{E_MP}. 

\subsubsection{The mechanism for the increase of the total variation}\label{ss:increase}
Before entering into the technical details of the construction of the initial datum $u_0$ that triggers the blow-up of the total variation, we make some heuristic considerations to describe the basic ideas underpinning the construction of $u_0$. In particular, we describe the very basic mechanism that leads to the total variation increase.

\begin{figure}
\centering
\def\svgwidth{0.6\columnwidth}
\input{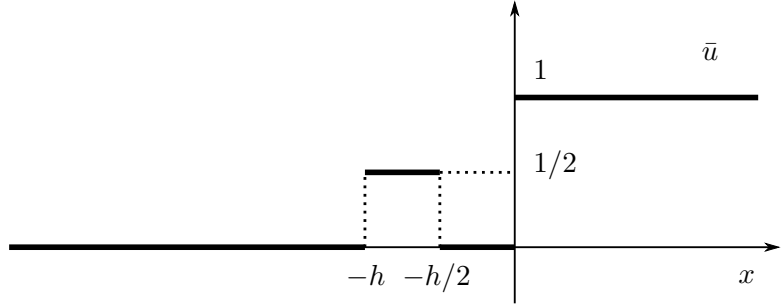}
\caption{The initial datum $\bar u$ triggering the total variation increase.}\label{f:bm}
\end{figure}
Fix $h>0$ and consider the function (sketched in~Figure~\ref{f:bm})
\begin{equation}
\label{e:baru}
   \bar u (x) = 
   \left\{
   \begin{array}{ll}
   1/2 & x \in [-h, -h/2] \\
   1 & x \ge 0 \\
   0 & \text{otherwise.} \\
   \end{array}
   \right.
\end{equation}

Consider now the solution of the Cauchy problem obtained by coupling~\eqref{e:nlee} with the initial 
condition~$u_\ee(0, x) = \bar u(x)$ in~\eqref{e:baru}. We observe that: 
\begin{itemize}
\item[(a)] $u_\ee (t, x) \equiv 1$ if $x\ge0$ and $t\ge 0$. Loosely speaking, this can can be seen by combining two facts: (i)~the nonlocal term evaluated at the point $(t, x)$ is only affected by the values of $u_\ee(t, z)$ at~$z\ge x$ and (ii) the characteristic line 
starting at $x=0$ has zero speed and hence information cannot cross the vertical axis. This implies that the values of the solution $u_\ee$ on $\R_+ \times \R_+$ are only affected by the values of the initial datum $\bar u$ on $\R_+$. Since $\bar u\equiv 1$ on $\R_+$, then $u_\ee \equiv 1$ on~$\R_+ \times \R_+$. 
\item[(b)] Assume that $\ee>h$ and consider the characteristic lines starting at $y \in [-h, -h/2]$. Since $\bar u (y + \ee) =1$, owing to~\eqref{e:material}, the material derivative at $t=0$ satisfies 
$$
      \left. \frac{d}{dt} u_\ee (t, X_\ee(t,y)) \right|_{t=0} =
    u_\ee(0,y) \ \frac{1- u_\ee (0, y)}{ \ee} = \frac{1}{4 \ee} >0, 
$$
which means that, at least for a small time, $u_\ee$ increases along the characteristic line $X_\ee(t, y)$. 
\item[(c)] By using again~\eqref{e:material}, we see that, if $\bar u(y) =0$, then $u_\ee$ is identically $0$ along the characteristic line~$X_\ee (\cdot, y)$.  
\end{itemize}
As a consequence we have that, for $\ee > h$, the solution $u_\ee$ is identically equal to $1$ on $\R_+ \times \R_+$, increases (locally in time) along the characteristic lines $X_\ee (\cdot, y)$ if $y \in [-h, -h/2]$, and vanishes identically elsewhere. We can infer 
that 
$$
   \mathrm{Tot Var} \, u_\ee (\tau, \cdot) >   \mathrm{Tot Var} \, \bar  u =2, \quad \text{for every $\tau >0$ sufficiently small}. 
$$

\subsubsection{Construction of the initial datum $u_0$}\label{s:id} 
There are two main issues we have to address in order to construct an initial datum as in the statement of Proposition~\ref{p:blowup}: (i)~in~\S\ref{ss:increase} the total variation increases only if $\ee > h$, and (ii)~we claim that the total variation not only increases but actually blows up. To tackle these issues, we introduce the building block $a: \R \to \R$ by setting 
\be 
\label{e:bb}
      a (x) : = \mathbbm{1}_{[-1, -3/4]}(x)
\eq
and we define $u_0$ as 
\be 
\label{e:uzero}
   u_0(x) : = \mathbbm{1}_{[0, + \infty[} (x) + \sum_{k=0}^{\infty} 2^{-k} a ({2^k} x). 
\eq
See Figure~\ref{f:u0} for a representation. 
\begin{figure}
\centering
\def\svgwidth{0.4\columnwidth}
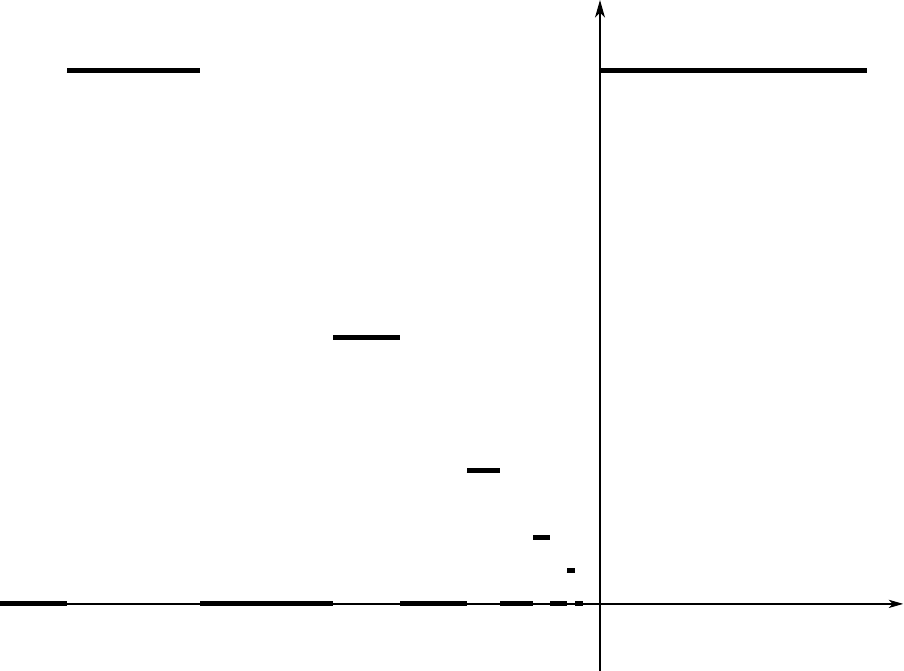
\caption{The initial datum $u_0$ triggering the total variation blow-up.} \label{f:u0}
\end{figure}
Note that due to the chosen scaling the building blocks of $u_0$ do not overlap and are separated by intervals where $u_0=0$. 
This implies that 
\be
\label{e:mptv}
  0 \leq u_0(x) \leq 1 \;  \text{for a.e. $x \in \R$}, \quad 
  \mathrm{TotVar} \ u_0 = 1 + 2 \sum_{k=0}^{\infty} 2^{-k}= 5 < + \infty.
\eq
Note furthermore that, very loosely speaking, $u_0$ is made by a sequence of building blocks that approaches the ``big jump" located at $t=0$. In this way, for every $\ee >0$ there are infinitely many building blocks that behave as the initial datum $\bar u$ in the example of \S\ref{ss:increase}. Each of them contributes to the total variation increase and this is the basic mechanism that leads to the total variation blow-up.   
\subsubsection{Preliminary results}
\label{s:pr}
In this paragraph we establish some qualitative properties of the solution of the Cauchy problem obtained by coupling~\eqref{e:nlee} with the initial datum $u_0$ in~\eqref{e:uzero}.

By combining the first inequality in~\eqref{e:mptv} with the maximum principle~\eqref{E_MP} we get that 
\be 
\label{e:mpfin}
    0 \leq u_\ee (t, x) \leq 1,  \;  \text{for a.e. $(t, x) \in \R_+ \times \R$}
\eq
and by recalling~\eqref{e:v} we arrive at 
\be 
\label{e:mpvf}
    0 \leq 1- w_\ee (t, x) \leq 1,  \;  \text{for a.e. $(t, x) \in \R_+ \times \R$}. 
\eq

\begin{lemma} \label{l:key}
Let $u_\ee$ be the solution of~\eqref{e:nlee} with initial datum~\eqref{e:uzero}, then 
\be 
 \label{e:u1}
     u_\ee (t, x) =1, \quad \text{for a.e. $(t, x)$ such that $x \ge 0$ and $t \ge 0$}.
\eq
\end{lemma}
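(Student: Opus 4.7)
My plan is to combine the maximum principle with a Gronwall argument along characteristics, closing the loop with a ``self-trapping'' observation about the characteristic starting at $y=0$.

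First, by the maximum principle (Lemma~\ref{l:mp}) applied to the initial datum~\eqref{e:uzero} we have $u_\ee \in [0,1]$, and then~\eqref{e:v} gives $1 - w_\ee \in [0,1]$. In particular every characteristic in~\eqref{e:chardef} has nonnegative speed. Using uniqueness for~\eqref{e:chardef} (which holds because $1 - w_\ee$ is locally Lipschitz in $x$) characteristics cannot cross, so for every $y\ge 0$ one has $X_\ee(t,y) \ge X_\ee(t,0) \ge 0$ for all $t\ge 0$.

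Next I would fix $y \ge 0$ and track the deviation from $1$ along the characteristic starting at $y$. Setting $f(t) = 1 - u_\ee(t, X_\ee(t,y))$ and $g(t) = 1 - u_\ee(t, X_\ee(t,y)+\ee)$, the material derivative formula~\eqref{e:material} rewrites as
\begin{equation*}
f'(t) \;=\; \frac{u_\ee(t, X_\ee(t,y))}{\ee}\,\bigl( f(t) - g(t) \bigr).
\end{equation*}
The maximum principle gives $u_\ee(t,X_\ee(t,y)) \le 1$ and $g(t) \ge 0$, so $f'(t) \le f(t)/\ee$. Since $y\ge 0$ implies $u_0(y)=1$, i.e.~$f(0)=0$, Gronwall together with $f \ge 0$ forces $f(t) \equiv 0$. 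Hence
\begin{equation*}
u_\ee(t, X_\ee(t,y)) = 1 \qquad \text{for all } t \ge 0 \text{ and all } y \ge 0.
\end{equation*}

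Finally I would close the argument with a rigidity step for the characteristic starting at $y=0$. By continuity of the flow in $y$ and the non-crossing property, the image $\{X_\ee(t,y) : y \ge 0\}$ coincides with $[X_\ee(t,0), +\infty)$, and by the previous step $u_\ee(t,\cdot) \equiv 1$ on this interval. In particular
\begin{equation*}
w_\ee\bigl(t, X_\ee(t,0)\bigr) \;=\; \frac{1}{\ee}\int_{X_\ee(t,0)}^{X_\ee(t,0)+\ee} u_\ee(t,z)\,dz \;=\; 1,
\end{equation*}
so $\tfrac{d}{dt}X_\ee(t,0) = 0$ and therefore $X_\ee(t,0)\equiv 0$. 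Combining with Step~2 yields~\eqref{e:u1}. The most delicate point is the transition from the Lagrangian statement ``$u_\ee = 1$ along each characteristic'' to the Eulerian statement ``$u_\ee(t,x)=1$ for a.e.~$x \ge X_\ee(t,0)$''; this relies on the fact that for each $t$ the map $y \mapsto X_\ee(t,y)$ is a bi-Lipschitz homeomorphism onto its image, a standard consequence of the Lipschitz regularity of $1 - w_\ee$ in the setting of~\cite{CLM,KeimerPflug}.
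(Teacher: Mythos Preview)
Your approach via characteristics is natural, but there is a sign error in the Gronwall step that is not merely cosmetic. From~\eqref{e:material} one has, with your notation $U=u_\ee(t,X_\ee(t,y))$,
\[
f'(t)=-U'=-\frac{U}{\ee}\bigl(u_\ee(t,X_\ee+\ee)-U\bigr)=\frac{U}{\ee}\bigl(g(t)-f(t)\bigr),
\]
not $\tfrac{U}{\ee}(f-g)$. With the correct sign the bound $f'\le f/\ee$ no longer follows from $g\ge 0$ and $U\le 1$: the term $+\tfrac{U}{\ee}\,g$ is nonnegative and involves the deviation from $1$ at the \emph{shifted} point $X_\ee(t,y)+\ee$, which is not on the characteristic through $y$ and is not controlled by $f$ alone. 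So the single-characteristic Gronwall does not close.

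The idea can be salvaged by coupling all characteristics starting from $[0,\infty)$. Since $X_\ee(t,y)+\ee\ge X_\ee(t,0)$ and the flow $y\mapsto X_\ee(t,y)$ maps $[0,\infty)$ onto $[X_\ee(t,0),\infty)$, the shifted point equals $X_\ee(t,y')$ for some $y'\ge 0$, so $g_y(t)\le F(t):=\sup_{z\ge 0}f_z(t)$. Then $f_y'\le F/\ee$ for every $y\ge 0$, hence $F(t)\le\ee^{-1}\int_0^t F(s)\,ds$ with $F(0)=0$, and Gronwall gives $F\equiv 0$; after that your Step~3 goes through. The paper takes a different and shorter route: since $w_\ee(t,x)$ for $x\ge 0$ depends only on $u_\ee|_{\{x\ge 0\}}$ and characteristics starting in $\{x\ge 0\}$ stay there (nonnegative speed), the restriction of $u_\ee$ to $\{x\ge 0\}$ solves the same Cauchy problem as the constant $1$, and uniqueness in the fixed-point framework of~\cite{CLM,KeimerPflug} yields~\eqref{e:u1} directly.
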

Lemma~\ref{l:key} can be shown arguing as in item~(a) in \S\ref{ss:increase} and its proof exploits the fact that, owing to the particular expression of the velocity field~\eqref{e:v}, the values of $u_\ee$ on $\R_+ \times \R_+$ are only affected by the values of the initial datum on $\R_+$ and hence on $\R_+ \times \R_+$ the solution $u_\ee$ behaves ``as if the initial datum is the constant $1$". This formal argument can be turned in a rigorous proof by 
a fixed-point argument as in~\cite{CLM,KeimerPflug}. 
As a consequence of Lemma~\ref{l:key}, we get the following fact. 
\begin{lemma}
\label{l:vainla}
Let $u_\ee$ be the solution of~\eqref{e:nlee} with initial datum~\eqref{e:uzero}, then 
\be \label{e:charpiccola} 
 y \leq X_\ee(t, y) \leq 0, \quad \text{for every $t\ge 0$, $y \leq 0$ and $\ee >0$}. 
\eq  
\end{lemma}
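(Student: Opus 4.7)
The plan is to establish the two inequalities separately, using the maximum principle for the lower bound and Lemma~\ref{l:key} together with uniqueness of characteristics for the upper bound.

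For the lower bound $X_\ee(t,y) \ge y$, I would simply observe that the characteristic ODE~\eqref{e:chardef} has right-hand side $1 - w_\ee(t,X_\ee(t,y))$, and by~\eqref{e:mpvf} this quantity is nonnegative. Hence $t \mapsto X_\ee(t,y)$ is nondecreasing, which gives $X_\ee(t,y) \ge X_\ee(0,y) = y$ for every $t \ge 0$.

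For the upper bound $X_\ee(t,y) \le 0$ when $y \le 0$, the key step is to show that the characteristic starting at $y=0$ is stationary, i.e.\ $X_\ee(t,0) \equiv 0$. Indeed, by Lemma~\ref{l:key} we have $u_\ee(t,z) = 1$ for every $z \ge 0$ and $t \ge 0$, so that
\begin{equation*}
w_\ee(t,0) = \frac{1}{\ee} \int_0^{\ee} u_\ee(t,z)\, dz = 1,
\end{equation*}
which means $1 - w_\ee(t,0) = 0$. Thus the constant function $t \mapsto 0$ solves the Cauchy problem~\eqref{e:chardef} with initial datum $y=0$. Since (as recalled right after~\eqref{e:chardef}) the field $1 - w_\ee$ is locally Lipschitz in $x$ for each fixed $\ee>0$, the solution of~\eqref{e:chardef} is unique, so $X_\ee(t,0) = 0$ for every $t \ge 0$.

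To conclude, I would invoke the standard comparison principle for scalar ODEs with Lipschitz right-hand side: two solutions of~\eqref{e:chardef} with initial data $y_1 \le y_2$ cannot cross, hence satisfy $X_\ee(t,y_1) \le X_\ee(t,y_2)$ for all $t \ge 0$. Applying this with $y_1 = y \le 0 = y_2$ yields $X_\ee(t,y) \le X_\ee(t,0) = 0$, completing the proof. I do not expect any serious obstacle here: the argument is entirely a consequence of Lemma~\ref{l:key} combined with the Lipschitz regularity of the velocity field (which guarantees both uniqueness and the non-crossing of characteristics); the only subtlety is that one must rely on~\eqref{e:v} to know that $w_\ee$ is Lipschitz in $x$, a fact that was already discussed in the text preceding~\eqref{e:material}.
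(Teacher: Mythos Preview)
Your proof is correct and follows essentially the same approach as the paper: the lower bound comes from the nonnegativity of $1-w_\ee$ in~\eqref{e:mpvf}, and the upper bound comes from showing $X_\ee(t,0)\equiv 0$ via Lemma~\ref{l:key} and then using that characteristics cannot cross. Your write-up is in fact slightly more detailed than the paper's (you spell out the computation of $w_\ee(t,0)$ and explicitly invoke uniqueness and comparison for the ODE), but the structure is identical.
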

\begin{proof}
By combining~\eqref{e:chardef} with the fact that
$w_\ee(t, x) = 1$ for every $x \ge 0$ and $t \ge 0$
we get that
\be \label{e:characteristic0} 
X_\ee (t, 0) =0, \quad \text{for every $\ee>0$ and $t \ge 0$}. 
\eq
Since the characteristic lines cannot intersect, this implies the inequality $X_\ee(t, y) \leq 0$ in~\eqref{e:charpiccola}. 
The inequality $y \leq X_\ee(t, y)$ follows from the the first inequality in~\eqref{e:mpvf}. 
\end{proof}
\subsubsection{Total variation blow-up} \label{s:bu}
We can now conclude the proof of Proposition~\ref{p:blowup}. 
Fix $\ee \in ]0, 1[$. By combining~\eqref{e:charpiccola} and~\eqref{e:u1} we get that 
\be
     u_\ee \big(t, X_\ee (t, y) + \ee \big)=1, \quad \text{for every $y \in [-\ee, 0]$ and $t \ge 0$}. 
\eq
Owing to~\eqref{e:material}, the material derivative satisfies 
\be \label{e:ode} 
    \frac{d}{dt} u_\ee (t, X_\ee) = u_\ee (t, X_\ee) \ \frac{1- u_\ee(t, X_\ee)}{\ee}, \quad \text{for every $y \in [-\ee, 0]$ and $t \ge 0$}.
\eq
By explicitly computing the solution of the ODE~\eqref{e:ode} we arrive at 
\be \label{es}
     u_\ee (t, X_\ee(t, y)) 
     = \frac{u_0(y)}{[1- u_0(y)]e^{-t/\ee} + u_0(y)}, \quad \text{for every $y \in [-\ee, 0]$ and $t \ge 0$}.
\eq
We recall~\eqref{e:uzero} and notice that 
\begin{equation}\label{e:osc}
u_0(y) = 
\left\{\begin{array}{ll}
2^{-k} & \text{if~$y \in [- 2^{-k}, -2^{-k}\cdot 3/4]$ for some $k \in \mathbb N$} \phantom{\displaystyle{\int}}\\
0 & \text{if~$y \in [-2^{-k}\cdot 3/4, -2^{-(k+1)}]$ for some $k \in \mathbb N$. } \phantom{\displaystyle{\int}}
\end{array}\right.
\end{equation} 
Using the fact that characteristic lines cannot intersect we conclude that 
\be 
\label{e:key}
    \mathrm{TotVar} \, u_\ee (\tau, \cdot) \ge 2 \sum_{k \ge - \log_2  \ee } \frac{2^{-k}}{[1- 2^{-k}]e^{-\tau/\ee} + 2^{-k}},
\eq
where we have used~\eqref{e:osc} and the restriction $k \ge - \log_2 \ee$ in the sum is due to the fact that~\eqref{es} is valid for $y \in [-\ee, 0]$. 

To establish~\eqref{e:blowup} it now suffices to show that, for every $\tau>0$, the right hand side in~\eqref{e:key} is not 
bounded as~$\ee \to 0^+$.  To this end, we first point out that 
\be 
\label{e:el1}
     \frac{2^{-k}}{[1- 2^{-k}]e^{-\tau/\ee} + 2^{-k}} \ge \frac{1}{2} \quad\Leftrightarrow\quad
     k \leq - \log_2 \left( \frac{e^{-\tau/\ee}}{1 + e^{-\tau/\ee}} \right),  
\eq
which owing to~\eqref{e:key} yields 
\be 
\label{e:key2}
    \mathrm{TotVar} \, u_\ee (\tau, \cdot) \ge \sharp 
    \left\{  k \in \mathbb N: \;  - \log_2 \ee \leq k \leq  - \log_2 \left( \frac{e^{-\tau/\ee}}{1 + e^{-\tau/\ee}} \right)   \right\}.
\eq     
In the previous expression, the symbol $\sharp$ denotes the cardinality of a set. By plugging the elementary inequality
$$
   - \log_2 \left( \frac{e^{-\tau/\ee}}{1 + e^{-\tau/\ee}} \right)  \ge - \log_2 \left( e^{-\tau/\ee} \right) = \frac{\tau}{\ee} \log_2 e 
$$
into~\eqref{e:key2} and choosing $\ee = 2^{-j}$, we get that 
\be 
\label{e:key3}
    \mathrm{TotVar} \, u_{2^{-j}} (\tau, \cdot) \ge \sharp 
    \left\{  k \in \mathbb N: \;   j \leq k \leq  2^j \tau  \log_2 e  \right\}.
\eq   
For any given $\tau>0$, the right hand side of~\eqref{e:key3} blows up as $j \to + \infty$, 
yielding~\eqref{e:blowup}. This concludes the proof of Proposition~\ref{p:blowup}. \qed
\subsection{Conclusion of the proof of Theorem~\ref{t:blowup}} \label{ss:pt}
To complete the proof of Theorem~\ref{t:blowup} we are left to show that i) we can modify the construction of $u_0$ in such a way that it satisfies the condition $\mathrm{Lip}^- u_0 < \infty$, and ii) we can extend the blow-up proof to the case of more general convolution kernels. 

To tackle issue i), it suffices to replace the building block $a$ in~\eqref{e:bb} with 
\be  \label{e:newa}
    \tilde a(x) : = 
    \left\{
     \begin{array}{ll}
     0 & x < -1 \\
     - 4 x -3  & -1 \leq x < -3/4 \\
    0 & x \ge -3/4. \\    
     \end{array}
    \right.
\eq
We define $u_0$ by plugging the above espression into~\eqref{e:uzero} and obtain that $u_0$ satisfies $0 \leq u_0 \leq 1$, $\TV \, u_0 \leq 5$ and $\mathrm{Lip}^- u_0 = 4$. One can then study the evolution of $u_\ee$ along the characteristic lines $X_\ee (\cdot, y)$ with $y = 2^{-k}$, $k \in \mathbb N$ and conclude that the key estimate~\eqref{e:key} is still valid. The rest of the proof of Proposition~\ref{p:blowup} extends with no need of modifications. 

To tackle issue ii) (extension of the proof to the case of more general kernels) we fix a Lipschitz continuous kernel $\eta$ as in the statement of Theorem~\ref{t:blowup}. We go back to the discussion about characteristic lines in~\S\ref{s:cl} and we point out that we have to replace~\eqref{e:v} with
\be \label{e:v2}
     w_\ee (t, x) = \int_x^{+\infty} u_\ee (y) \eta_\ee (x-y) dy \implies 
    - \partial_x [1 - w_\ee (t, x) ] = - u_\ee (x) \eta_\ee (0) + \int_x^{+\infty} u_\ee (y) \eta'_\ee (x-y) dy. 
\eq
Note that, since $\eta_\ee$ has unit integral and $0\leq u_\ee \leq 1$, then~\eqref{e:mpvf} is still valid and the rest of the analysis in~\S\ref{s:pr} extends with no modifications.  We now discuss how we can modify~\eqref{e:material}. Let us fix $x^\ast<0$ such that
\be
\label{e:xast}
   M:=\frac{\eta(x^\ast)}{\eta(0)} \ge \frac{3}{4} . 
\eq 
Next, we fix $y$ such that $ \ee x^\ast \leq y < 0$, which owing to~\eqref{e:charpiccola} yields $- \ee x^\ast\leq X_\ee (t, y)<0$ for every $t\ge0$. We have the following chain of inequalities:
\be \begin{split}\label{e:K}
     \frac{d }{dt} u_\ee (t, X_\ee)& \stackrel{\eqref{e:v2}}{=} u_\ee (t, X_\ee) \left(  - u_\ee (t, X_\ee) \eta_\ee (0) + \int_{X_\ee}^{+\infty} u_\ee (y) \eta'_\ee (X_\ee-y) dy \right) \\ & \stackrel{u_\ee, \eta_\ee' \ge 0}{\ge}
    u_\ee  (t, X_\ee) \left(  - u_\ee  (t, X_\ee) \eta_\ee (0) + \int_{0}^{+\infty} \underbrace{u_\ee (y)}_{=1 \text{by~\eqref{e:u1}}} \eta'_\ee (X_\ee-y) dy \right) \\ &
     = 
      u_\ee  (t, X_\ee) \left(  - u_\ee  (t, X_\ee) \eta_\ee (0) + \int_{0}^{+\infty}  \eta'_\ee 
(X_\ee-y) dy \right) 
    \\ & =  u_\ee  (t, X_\ee) \big(  - u_\ee  (t, X_\ee) \eta_\ee (0) +  \eta_\ee 
(X_\ee) \big) \stackrel{\eta_\ee'\ge 0}{\ge} 
      u_\ee  (t, X_\ee) \big(  - u_\ee  (t, X_\ee) \eta_\ee (0) +  \eta_\ee 
(x^\ast \ee) \big).
\phantom{\int}
\end{split}
\eq
We recall that the constant $M$ is defined in~\eqref{e:xast}.  Since $\eta_\ee (x) = \eta(x/\ee)/\ee$, then
$M= \eta_\ee 
(x^\ast \ee)/ \eta_\ee(0)$ and 
 by using~\eqref{e:K} we get 
$$
    \frac{d }{dt} u_\ee (t, X_\ee) \ge  u_\ee  (t, X_\ee) \eta_\ee (0) \left(  - u_\ee  (t, X_\ee) + M \right)
    = \eta (0) \frac{u_\ee  (t, X_\ee)  \left(  - u_\ee  (t, X_\ee) + M \right)}{\ee}.
$$
We compute the explicit solution of the ODE $\dot{u}= \eta(0) u (M-u)/\ee$ and by a classical comparison argument for ODEs we conclude that
\be 
    u_\ee (t, X_\ee (t, y)) \ge \frac{M u_0(y)}{(M-u_0(y))e^{-\eta(0)t/\ee } + u_0(y)}
   \quad \text{for every $y \ge  \ee x^\ast$}. 
\eq
This implies that we can replace~\eqref{e:key} with 
\be \label{e:keynew}
       \mathrm{TotVar} \, u_\ee (\tau, \cdot) \ge 2 \sum_{k \ge - \log_2 \ee x^\ast} 
        \frac{M 2^{-k}}{(M- 2^{-k})e^{-\eta(0)t/\ee } + 2^{-k}}.
\eq  
 The rest of the analysis in~\S\ref{s:bu} straightforwardly extends and this yields~\eqref{e:blowup}. 
 \qed
 
\section*{Acknowledgments} \noindent
MC is partially supported by
the Swiss National Science Foundation grant 182565.
GC and EM are partially supported by the Swiss National Science Foundation
grant 200020\_156112 and by the ERC Starting Grant 676675 FLIRT. LVS is a member of the GNAMPA group of INDAM. Part of this work was done when MC and LVS were visiting the University of Basel: its  kind hospitality is gratefully acknowledged.  
\bibliographystyle{plain}
\bibliography{tv}
\end{document}